\newcommand{\thickhline}{%
    \noalign {\ifnum 0=`}\fi \hrule height 1pt
    \futurelet \reserved@a \@xhline
}
\newcolumntype{"}{@{\hskip\tabcolsep\vrule width 1pt\hskip\tabcolsep}}
\newtheorem{theorem}{Theorem}[section]
\theoremstyle{definition}
\newtheorem{corollary}[theorem]{Corollary}
\newtheorem{example}[theorem]{Example}
\theoremstyle{remark}
\newtheorem{remark}[theorem]{Remark}
\numberwithin{equation}{section}
\def\m{{\mathfrak m}}
\begin{document}
\title{4-generated pseudo symmetric monomial curves with not Cohen-Macaulay tangent cones}
\author{N\.{i}l \c{S}ah\.{i}n}
\address{Department of Industrial Engineering, Bilkent University, Ankara, 06800 Turkey}
\email{nilsahin@bilkent.edu.tr}

\thanks{}

\subjclass[2010]{Primary 13H10, 14H20; Secondary 13P10}
\keywords{Hilbert function, tangent cone, monomial
curve, numerical semigroup, standard bases}

\date{\today}

\commby{}

\dedicatory{}

\begin{abstract}In this article, standard bases of some toric ideals associated to 4-generated pseudo symmetric semigroups with not Cohen-Macaulay tangent cones at the origin are computed. As the tangent cones are not Cohen-Macaulay, non-decreasingness of the Hilbert function of the local ring was not guaranteed. Therefore, using these standard bases, Hilbert functions are explicitly computed as a step towards the characterization of Hilbert function. In addition, when the smallest integer satisfying $k(\alpha_2+1)<(k-1)\alpha_1+(k+1)\alpha_{21}+\alpha_3$ is $1$, it is proved that the Hilbert function of the local ring is non-decreasing. 

\end{abstract}

\maketitle

\section{introduction}\label{1}
Let $n_1< n_2<\dots<n_k$ be positive integers with $\gcd (n_1,\dots,n_k)=1$ and let $S$ be the numerical semigroup $S=\langle n_1,\dots,n_k \rangle=\{ \displaystyle\sum_{i=1}^{k} u_in_i | u_i \in \mathbb{N}\}$. $K$ being an algebraically closed field, let $K[S]=K[t^{n_1}, t^{n_2}, \dots, t^{n_k}]$ be the semigroup ring of $S$ and $A=K[X_1,X_2,\dots,X_k]$. If  $\phi: A {\longrightarrow} K[S]$ with $\phi(X_i)=t^{n_i}$ and $\ker \phi=I_S$ , then $K[S]\simeq A/I_S$. If we denote the affine curve with parametrization 
$$X_1=t^{n_1},\ \ X_2=t^{n_2},\ \dots,\  X_k=t^{n_k}  $$
corresponding to $S$ with $C_S$, then $I_S$ is called the defining ideal of $C_S$. The smallest integer $n_1$ in the semigroup is called the \textit{multiplicity} of $C_S$. Denote the corresponding local ring with $R_S=K[[t^{n_1},\dots,t^{n_k}]]$ and say the maximal ideal is $\m=\langle t^{n_1},\dots,t^{n_k}\rangle$. Then
$gr_{\mathfrak{m}}(R_S)=\bigoplus_{i=0}^{\infty} \mathfrak{m}^i/\mathfrak{m}^{i+1}\cong A/{I^*_S},$ is the associated graded ring 
where ${I^*_S}=\langle f^*| f \in I_S \rangle$ with $f^*$ denoting the least homogeneous summand of $f$. 

Hilbert function $H_{R_S}(n)$ of the local ring $R_S$ with the maximal ideal
$\mathfrak{m}$ is defined to be the Hilbert function of the associated
graded ring $gr_{\mathfrak{m}}(R_S)=\bigoplus_{i=0}^{\infty} \mathfrak{m}^i/\mathfrak{m}^{i+1}$.
In other words,
$$H_{R_S}(n)=H_{gr_{\mathfrak{m}}(R_S)}(n)=dim_{R_S/\mathfrak{m}}(\mathfrak{m}^n/\mathfrak{m}^{n+1}) \; \; n\geq
0.$$
This function is called non-decreasing if $H_{R_S}(n)\geq H_{R_S}(n-1)$ for all $n \in \mathbb{N}$. If $\exists l \in \mathbb{N}$ such that $H_{R_S}(l)> H_{R_S}(l-1)$ then it is called decreasing at level $l$.
The Hilbert series of $R_S$ is defined to be the generating function
$$HS_{R_S}(t)=\begin{displaystyle}\sum_{n \in \mathbb{N}}\end{displaystyle}H_{R_S}(n)t^n.$$
By the Hilbert-Serre theorem it can also be written as:
$HS_{R_S}(t)=\frac{P(t)}{(1-t)^k}=\frac{Q(t)}{(1-t)^d}$, where $P(t)$ and $Q(t)$ are polynomials with coefficients from
$\mathbb{Z}$ and $d$ is the Krull dimension of $R_S$. $P(t)$ is called first Hilbert Series and $Q(t)$ is called second Hilbert series, \cite{greuel-pfister,Rossi}. It is also known that there is
a polynomial $P_{R_S}(n) \in \mathbb{Q}[n]$ called the Hilbert polynomial of $R_S$ such that
$H_{R_S}(n)=P_{R_S}(n)$ for all $n \geq n_0$, for some $n_0 \in \mathbb{N}$. The smallest $n_0$ satisfying this
condition is the regularity index of the Hilbert function of $R_S$. A natural question is whether the Hilbert function of the local ring is non-decreasing. In general Cohen-Macaulayness of a one dimensional local ring does not guarantee the nondecreasingness of its Hilbert Function for embedding dimensions greater than three. However, it is known that if the tangent cone is Cohen-Macaulay, then the Hilbert function of the local ring is non-decreasing.  When the tangent cone is not Cohen-Macaulay, Hilbert function of the local ring may decrease at some level.
Rossi's conjecture states that  ``Hilbert function of a Gorenstein Local ring of dimension one is non-decreasing''. This conjecture is still open in embedding dimension 4 even for the monomial curves. It is known that the local ring corresponding to monomial curve is Gorenstein iff corresponding numerical semigroup is symmetric. Arslan and Mete in \cite{am} proved the conjecture is true for monomial curves in affine $4$-space with symmetric semigroups under the numerical condition  ``$\alpha_2\leq\alpha_{21}+\alpha_{24}$" on the generators of the semigroup by proving that the tangent cone is Cohen-Macaulay. The conjecture is open for local rings corresponding to 4-generated symmetric semigroups with $\alpha_{2}>\alpha_{21}+\alpha_{24}$. For some recent work on the monotonicity of the Hilbert Function see \cite{am,AMS,AOS,mz,Oneto,Puthen}.

Since symmetric and pseudo-symmetric semigroups are maximal with respect to inclusion with fixed genus, a natural question is whether Rossi's conjecture is even true for local rings corresponding to 4-generated pseudo-symmetric semigroups. The main aim of this paper is to give a characterization to the Hilbert functions of the local rings of some 4 generated pseudo- symmetric monomial curves. In \cite{SahinSahin}, we showed that if $\alpha_2 \leq \alpha_{21}+1$, then the tangent cone is Cohen-Macaulay, and hence, the Hilbert function of the local ring is non-decreasing. In this paper, we focus on the open case of 4-generated pseudo symmetric monomial curves with $\alpha_{2}>\alpha_{21}+1$. For the computation of the first Hilbert series of the tangent cone, a standard basis based computation with the algorithm in \cite{bayer} will be used. 

Recall from \cite{komeda} that a $4$-generated semigroup $S=\langle n_1,n_2,n_3,n_4 \rangle$ is pseudo-symmetric if and only if there are integers $\alpha_i>1$, for
$1\le i\le4$, and $\alpha_{21}>0$ with $\alpha_{21}<\alpha_1-1$,
such that 
 \begin{eqnarray*}
n_1&=&\alpha_2\alpha_3(\alpha_4-1)+1,\\
n_2&=&\alpha_{21}\alpha_3\alpha_4+(\alpha_1-\alpha_{21}-1)(\alpha_3-1)+\alpha_3,\\
n_3&=&\alpha_1\alpha_4+(\alpha_1-\alpha_{21}-1)(\alpha_2-1)(\alpha_4-1)-\alpha_4+1,\\
n_4&=&\alpha_1\alpha_2(\alpha_3-1)+\alpha_{21}(\alpha_2-1)+\alpha_2. 
\end{eqnarray*}
Then, the toric ideal is $I_S=\langle f_1,f_2,f_3,f_4,f_5 \rangle$ with
\begin{eqnarray*} f_1&=&X_1^{\alpha_1}-X_3X_4^{\alpha_4-1},  \quad \quad
f_2=X_2^{\alpha_2}-X_1^{\alpha_{21}}X_4, \quad
f_3=X_3^{\alpha_3}-X_1^{\alpha_1-\alpha_{21}-1}X_2,\\
f_4&=&X_4^{\alpha_4}-X_1X_2^{\alpha_2-1}X_3^{\alpha_3-1}, \quad 
f_5=X_1^{\alpha_{21}+1}X_3^{\alpha_3-1}-X_2X_4^{\alpha_4-1}.
\end{eqnarray*}  
If $n_1<n_2<n_3< n_4$ then it is known from \cite{SahinSahin} that 
\begin{enumerate}
\item $\alpha_1>\alpha_4$
\item $\alpha_3<\alpha_1-\alpha_{21}$
\item $\alpha_4<\alpha_2+\alpha_3-1$
\end{enumerate}
and these conditions completely determine the leading monomials of $f_1, f_3$ and $f_4$. Indeed, $ {\rm LM}( f_1)= X_3X_4^{\alpha_{4}-1}$ by $(1)$, ${\rm LM}(f_3)= X_3^{\alpha_3}$ by $(2)$, ${\rm LM}(f_4)=X_4^{\alpha_4}$ by $(3)$
If we also let 
\begin{enumerate}
\item[(4)]  $\alpha_2>\alpha_{21}+1$
\end{enumerate}
then ${\rm LM}(f_2)=X_1^{\alpha_{21}}X_4$ by $(4)$. To determine the leading monomial of $f_5$ we need the following remark.

\begin{remark}
Let $n_1<n_2<n_3<n_4$. Then $(4)$ implies
\begin{enumerate}
\item[(5)] $\alpha_{21}+\alpha_3 > \alpha_4$ 
\end{enumerate}

\end{remark}
\begin{proof}
Assume to the contrary $\alpha_{21}+\alpha_3 \leq \alpha_4$. Then, $\alpha_4=\alpha_{21}+\alpha_3+n,\ \ (*),$  for some nonnegative $n$. Then from  $(1)$, $\alpha_1=\alpha_{21}+\alpha_3+n+m\ \ (**),$ for some positive $m$ and hence
$\alpha_2=\alpha_{21}+1+k\ \ (***),$ for some positive $k$ by $(4)$. Then from $n_1 < n_2$ we have:\\
$\alpha_2\alpha_3(\alpha_4-1)+1< \alpha_{21}\alpha_3\alpha_4+(\alpha_1-\alpha_{21}-1)(\alpha_{3}-1)+\alpha_3$. Using $(***)$, we have:\\
$\alpha_{21}\alpha_3\alpha_4+(1+k)\alpha_3\alpha_4-\alpha_3(\alpha_{21}+1+k)+1 < \alpha_{21}\alpha_3\alpha_4+(\alpha_1-\alpha_{21}-1)(\alpha_3-1)\alpha_3$,\\
$\alpha_3((1+k)\alpha_4-\alpha_{21}-1-k-\alpha_1+\alpha_{21}+1-1)+1<1+\alpha_{21}-\alpha_1$. Then from $(**)$:\\
$\alpha_3(\alpha_4+k\alpha_4-1-k-\alpha_1)<-\alpha_3-n-m$. Using $(*)$ and $(**)$ again, we get:\\
$\alpha_3(m+k\alpha_4-k)<-n-m<0$. As $\alpha_3>0$, we have $m+k(\alpha_4-1)<0$ which is a contradiction as each term in the sum is positive.
\end{proof}

\begin{remark}
Let $n_1<n_2<n_3<n_4$. Then $(1)$ and $(4)$ implies
\begin{enumerate}
\item[(6)] $\alpha_1+\alpha_{21}+1\geq \alpha_2+\alpha_4$
\end{enumerate} 
  
\end{remark}
\begin{proof}
Assume to the contrary $\alpha_1+\alpha_{21}+1< \alpha_2+\alpha_4$. We know from $(1)$ and $(4)$ that $\alpha_1=\alpha_4+m$ and $\alpha_2=\alpha_{21}+1+k$, for some positive $m$ and $k$. Hence
\begin{eqnarray*}
\alpha_1+\alpha_{21}+1<\alpha_2+\alpha_4&\Rightarrow& \alpha_4+m+\alpha_{21}+1<\alpha_{21}+1+k+\alpha_4\\
&\Rightarrow& m<k
\end{eqnarray*}
On the other hand, from $n_1<n_2$ we have:\\
$\alpha_2\alpha_3(\alpha_4-1)+1< \alpha_{21}\alpha_3\alpha_4+(\alpha_1-\alpha_{21}-1)(\alpha_{3}-1)+\alpha_3$. From $(1)$ and $(4)$:\\
$(\alpha_{21}+1+k)\alpha_3(\alpha_4-1)+1< \alpha_{21}\alpha_3\alpha_4+(\alpha_4-\alpha_{21}+m-1)(\alpha_{3}-1)+\alpha_3$. Expanding this, we obtain\\
$k\alpha_3\alpha_4+\alpha_4+m<\alpha_3(m+1+k)+\alpha_{21}$. Now as $\alpha_4+m=\alpha_1$ and $\alpha_{21}<\alpha_1$:\\
$k\alpha_3\alpha_4+\alpha_1<\alpha_3(m+1+k)+\alpha_{21}<\alpha_3(m+1+k)+\alpha_{1}\Rightarrow k\alpha_3\alpha_4<\alpha_3(m+k+1) \Rightarrow k\alpha_4<m+k+1$\\
$\Rightarrow k(\alpha_4-1)<m+1<k+1$ which is a contradiction as $\alpha_4-1\geq 1$.Hence, $\alpha_1+\alpha_{21}+1\geq \alpha_2+\alpha_4.$

\end{proof}
Being able to determine the leading monomials of the generators of $I_S$ in the open case, lots of different possibilities must be considered for the leading monomials of the s-polynomials appearing in standard bases computation. Unfortunately, there is not a general form for the standard basis if $\alpha_2>\alpha_{21}+1$ contrary to the case $\alpha_2\leq\alpha_{21}+1$. Though there are five elements in minimal standard basis of $I_S$ if $\alpha_2\leq\alpha_{21}+1$ (see \cite{SahinSahin}), the number of elements in the standard basis increases as $\alpha_4$ increases if $\alpha_2>\alpha_{21}+1$.
\begin{example} The following examples are done using SINGULAR, see \cite{singular}.
\begin{itemize} 
\item Standard basis for $\alpha_{21}=8 , \alpha_1=16 , \alpha_2=20 ,\alpha_3=7, \alpha_{4}=2 $ is $\{ X_1^{16}-X_3X_4,
X_2^{20}-X_1^8X_4,
X_3^7-X_1^7X_2,
X_4^2-X_1X_2^{19}X_3^6,
X_1^9X_3^6-X_2X_4,
X_1^{24}-X_2^{20}X_3,
X_1^{17}X_3^6-X_2^{21} \}$
\item Standard basis for $\alpha_{21}=2 , \alpha_1=9 , \alpha_2=5 ,\alpha_3=3, \alpha_{4}=3 $ is $\{X_3^3-X_1^6X_2, X_1^2X_4-X_2^5, X_2X_4^2-X_1^3X_3^2, X_3X_4^2-X_1^9, X_4^3-X_1X_2^4X_3^2, X_1^5X_3^2-X_2^6X_4, X_2^5X_3X_4-X_1^{11}, X_2^{10}X_3-X_1^{13},X_2^{16}X_4-X_1^{18}X_3, X_2^{21}-X_1^{20}X_3 \}$

\item Standard basis for $\alpha_{21}=8 , \alpha_1=16 , \alpha_2=11 ,\alpha_3=3, \alpha_{4}=5 $ is $\{X_3^3-X_1^7X_2,X_2X_4^4-X_1^9X_3^2, X_3X_4^4-X_1^{16},X_4^5-X_1X_2^{10}X_3^2, X_1^8X_4-X_2^{11}, X_2^{12}X_4^3-X_1^{17}X_3^2, X_2^{11}X_3X_4^3-X_1^{24}, X_2^{23}X_4^2-X_1^{25}X_3^2, X_2^{22}X_3X_4^2-X_1^{32}, X_1^{33}X_3^2-X_2^{34}X_4, X_2^{33}X_3X_4-X_1^{40},  X_2^{44}X_3-X_1^{48}, X_2^{78}X_4-X_1^{81}X_3, X_2^{89}-X_1^{89}X_3\}$
\end{itemize}

\end{example}
Hence, we focus on the case when $\alpha_4=2$
\section{Standard bases}\label{2}

\begin{theorem}
Let $S=\langle n_1,n_2,n_3,n_4 \rangle$ be a 4-generated pseudosymmetric numerical semigroup with $n_1<n_2<n_3<n_4$ and $\alpha_2>\alpha_{21}+1$. If $\alpha_4=2$ and $k$ is the smallest positive integer such that $k(\alpha_2+1)<(k-1)\alpha_1+(k+1)\alpha_{21}+\alpha_3$ then the standard basis for $I_S$ is
$$\{f_1,f_2,f_3,f_4,f_5,f_6,...,f_{6+k}\}$$
where $f_6=X_1^{\alpha_1+\alpha_{21}}-X_2^{\alpha_2}X_3$ and $f_{6+j}=X_1^{(j-1)\alpha_1+(j+1)\alpha_{21}+1}X_3^{\alpha_3-j}-X_2^{j\alpha_2+1}$ for $j=1,2,...,k$
\end{theorem}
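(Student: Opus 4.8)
The plan is to prove this by running the standard-basis (Buchberger-type) algorithm for standard bases as described in \cite{bayer}, starting from the generators $f_1,\dots,f_5$ of $I_S$ together with the leading-monomial data established in the excerpt (using conditions $(1)$–$(6)$ and the specialization $\alpha_4=2$), and showing that the process terminates exactly after producing $f_6,\dots,f_{6+k}$. First I would fix the local (negative-degree / anti-graded) term order compatible with taking least homogeneous summands, and record the leading monomials in the $\alpha_4=2$ case: $\operatorname{LM}(f_1)=X_3X_4$, $\operatorname{LM}(f_2)=X_1^{\alpha_{21}}X_4$, $\operatorname{LM}(f_3)=X_3^{\alpha_3}$, $\operatorname{LM}(f_4)=X_4^2$, and $\operatorname{LM}(f_5)=X_2X_4$ (the last following from Remark (5), $\alpha_{21}+\alpha_3>\alpha_4=2$, which forces $X_1^{\alpha_{21}+1}X_3^{\alpha_3-1}$ to have larger order than $X_2X_4$). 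With $\alpha_4=2$ the binomial structure collapses so that $X_4$ appears to degree at most one in most reductions, which is what keeps the basis finite.

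\emph{The computational core} is to compute the relevant S-polynomials and normal-form reductions. I would first show that the S-polynomial of $f_1$ and $f_5$ (or of $f_2$ and $f_5$), after reduction modulo $f_1,\dots,f_5$, produces exactly $f_6=X_1^{\alpha_1+\alpha_{21}}-X_2^{\alpha_2}X_3$, with $\operatorname{LM}(f_6)=X_1^{\alpha_1+\alpha_{21}}$; this is the new generator that starts the tower. Then I would set up an \emph{induction on $j$}: assuming $f_{6},\dots,f_{6+j}$ are in the basis with $f_{6+j}=X_1^{(j-1)\alpha_1+(j+1)\alpha_{21}+1}X_3^{\alpha_3-j}-X_2^{j\alpha_2+1}$, I would compute $\mathrm{Spoly}(f_{6+j},f_5)$ (pairing the $X_2$-term against $\operatorname{LM}(f_5)=X_2X_4$, or against $f_2$, to raise the $X_1$-power and lower the $X_3$-power), reduce it using $f_1,\dots,f_5$ and the previously constructed $f$'s, and verify the normal form is precisely $f_{6+j+1}$. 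The exponent bookkeeping is routine but must be done carefully: each step trades one factor of $X_3$ for a factor of $X_1^{\alpha_1}X_1^{2\alpha_{21}-\alpha_{21}}$-type increase and bumps the $X_2$-exponent by $\alpha_2$, which reproduces the stated exponent formulas, and the sign patterns of the binomials must remain consistent with the chosen term order.

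\emph{The termination argument} is where the hypothesis on $k$ enters decisively, and I expect this to be the main obstacle. The index $k$ is defined as the smallest positive integer with $k(\alpha_2+1)<(k-1)\alpha_1+(k+1)\alpha_{21}+\alpha_3$; I would show that this inequality is exactly the condition under which, at step $j=k$, the leading monomial of the candidate next element switches sides, so that the S-polynomial of $f_{6+k}$ with the remaining generators reduces to zero rather than producing a genuinely new element. Concretely, for $j<k$ the reverse inequality $j(\alpha_2+1)\ge (j-1)\alpha_1+(j+1)\alpha_{21}+\alpha_3$ guarantees $\operatorname{LM}(f_{6+j})=X_1^{(j-1)\alpha_1+(j+1)\alpha_{21}+1}X_3^{\alpha_3-j}$ and forces a nonzero normal form, while at $j=k$ the flip makes the leading term reducible by earlier elements. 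Finally I would invoke Buchberger's criterion (in the standard-basis / Mora normal-form version): once all S-polynomials among $f_1,\dots,f_{6+k}$ reduce to zero, the set is a standard basis. The delicate points are (i) verifying that \emph{all} S-pairs, not just the "expected" ones along the tower, reduce to zero — in particular the many pairs involving $f_3$ and the $X_3$-powers — and (ii) checking that no reduction step is blocked by the local order (i.e. that Mora's tangent-cone division applies), which is exactly where the $\alpha_4=2$ simplification and conditions $(5)$–$(6)$ are needed to control the competing monomials.
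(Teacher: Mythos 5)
Your overall strategy --- run Mora's standard basis algorithm on $f_1,\dots,f_5$ with the local degree ordering, generate $f_6,\dots,f_{6+k}$ as nonzero normal forms of S-polynomials, then invoke Buchberger's criterion once every S-pair reduces to zero, with the minimality of $k$ and the ``flip'' of the leading monomial at $j=k$ accounting for termination --- is exactly the plan of the paper, and your termination heuristic is correct: for $j<k$ one has $j(\alpha_2+1)\geq(j-1)\alpha_1+(j+1)\alpha_{21}+\alpha_3$, so ${\rm LM}(f_{6+j})$ is the $X_1$--$X_3$ monomial, while at $j=k$ it flips to $X_2^{k\alpha_2+1}$, after which ${\rm spoly}(f_6,f_{6+k})=X_1^{\alpha_1+\alpha_{21}}f_{6+(k-1)}$ reduces to zero and the tower stops. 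However, the concrete computational claims you commit to are wrong in ways that would derail the proof.

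First, ${\rm LM}(f_6)$ is misidentified. Condition $(6)$ gives $\alpha_1+\alpha_{21}\geq\alpha_2+1$, so under the local ordering (lowest total degree wins --- the convention you yourself applied correctly to $f_1$ and $f_5$) the leading monomial of $f_6=X_1^{\alpha_1+\alpha_{21}}-X_2^{\alpha_2}X_3$ is $X_2^{\alpha_2}X_3$, not $X_1^{\alpha_1+\alpha_{21}}$; this is also why ${f_6}^*=X_2^{\alpha_2}X_3$ in the Corollary. Second, the pairs you designate as the source of the new elements do not produce them: ${\rm spoly}(f_1,f_5)=X_1^{\alpha_{21}+1}f_3$ reduces to zero, and ${\rm spoly}(f_2,f_5)$ equals $f_7$, not $f_6$; in fact $f_6={\rm spoly}(f_1,f_2)$. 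Third, and most damaging for your induction on $j$: for $j<k$ the leading monomials of $f_5$ and $f_{6+j}$ are $X_2X_4$ and $X_1^{(j-1)\alpha_1+(j+1)\alpha_{21}+1}X_3^{\alpha_3-j}$, which are relatively prime, so ${\rm spoly}(f_5,f_{6+j})$ reduces to zero by the product criterion and can never yield $f_{6+j+1}$; ``pairing the $X_2$-term'' of $f_{6+j}$ against ${\rm LM}(f_5)$ is not a legal step, since S-polynomials pair leading terms only. The tower is actually propagated by ${\rm spoly}(f_6,f_{6+j})=f_{6+(j+1)}$, which works precisely because ${\rm LM}(f_6)=X_2^{\alpha_2}X_3$ cancels against the $X_3$-part of ${\rm LM}(f_{6+j})$ --- so your first and third errors are linked: with your choice of ${\rm LM}(f_6)$ the correct generating pair would never fire. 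Until the leading monomials and generating pairs are corrected, the ``routine bookkeeping'' cannot begin, and the bulk of the paper's proof --- verifying that the remaining pairs such as ${\rm spoly}(f_1,f_{6+j})$, ${\rm spoly}(f_2,f_{6+j})$, ${\rm spoly}(f_3,f_{6+j})$, ${\rm spoly}(f_{6+i},f_{6+j})$ and ${\rm spoly}(f_5,f_{6+k})$ all reduce to zero through explicit chains of reductions ending in multiples of $f_6$ --- is missing entirely from your proposal.
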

\begin{proof}
We will prove the theorem using induction on $k$ and apply standard basis algorithm with NFM\tiny{ORA} \normalsize  as the normal form algorithm, see \cite{greuel-pfister}. Here $T_h$ denotes the set $\{g \in G : {\rm LM}(g) \mid {\rm LM}(h)\}$ and ${\rm ecart}(h)$ is
${deg}(h)-{deg}({\rm LM}(h))$. Note that ${\rm LM}(f_6)=X_2^{\alpha_2}X_3$ by $(6)$.

\begin{center}
\underline{ For $k=1$:}
\end{center}
In this case $f_7=X_1^{2\alpha_{21}+1}X_3^{\alpha_3-1}-X_2^{\alpha_2+1}$ and $\alpha_2+1<2\alpha_{21}+\alpha_3$ which implies that ${\rm LM}(f_7)=X_2^{\alpha_2+1}$ . We need to show that $NF(\text{spoly}(f_m,f_n) \vert G)=0$ for all $m,n$ with $1 \leq m < n \leq 7$.

\begin{itemize}
\item  ${\rm spoly}(f_1,f_2)=f_6$ and hence  $NF({\rm spoly}(f_1,f_2) \vert G)=0$
\item   ${\rm spoly}(f_1,f_3)=X_1^{\alpha_{1}}X_3^{\alpha_3-1}-X_1^{\alpha_1-\alpha_{21}-1}X_2X_4$ and ${\rm LM}({\rm spoly}(f_1,f_{3}))=X_1^{\alpha_1-\alpha_{21}-1}X_2X_4$ by $(5)$. Let $h_1={\rm spoly}(f_1,f_3)$. If $\alpha_{1}<2\alpha_{21}+1$ or $2\alpha_{21}+\alpha_{3}\leq \alpha_{2}+1$ then $T_{h_1}=\{f_5\}$ and since ${\rm spoly}(h_1,g)=0$,  $NF({\rm spoly}(f_1,f_3) \vert G)=0$. Otherwise $T_{h_1}=\{f_2\}$ and ${\rm spoly}(h_1,f_2)=X_1^{\alpha_1-2\alpha_{21}-1}X_2^{\alpha_2+1}-X_1^{\alpha_{1}}X_3^{\alpha_{3}-1}$. Set $h_2={\rm spoly}(h_1,f_2)$, ${\rm LM}(h_2)=X_1^{\alpha_1-2\alpha_{21}-1}X_2^{\alpha_2+1}$ and $T_{h_2}=\{f_7\}$ and  ${\rm spoly}(h_2,f_7)=0$, hence $NF({\rm spoly}(f_1,f_3) \vert G)=0$ . 
\item   ${\rm spoly}(f_1,f_4)=X_1^{\alpha_1}X_4-X_1X_2^{\alpha_{2}-1}X_3^{\alpha_3}$. Set $h_1={\rm spoly}(f_1,f_4)$. If ${\rm LM}(h_1)=X_1^{\alpha_1}X_4$ then $T_{h_1}=\{f_2\}$ and ${\rm spoly}(h_1,f_2)=X_1X_2^{\alpha_2-1}f_3$. If ${\rm LM}(h_1)=X_1X_2^{\alpha_{2}-1}X_3^{\alpha_3}$ then $T_{h_1}=\{f_3\}$ and ${\rm spoly}(h_1,f_3)=X_1^{\alpha_1-\alpha_{21}}f_2$. Hence in both cases, $NF({\rm spoly}(f_1,f_4) \vert G)=0$
\item   ${\rm spoly}(f_1,f_5)=X_1^{\alpha_{21}+1}X_3^{\alpha_3}-X_1^{\alpha_{1}}X_2=X_1^{\alpha_{21}+1}f_3$ hence $NF({\rm spoly}(f_1,f_5) \vert G)=0$
\item   ${\rm spoly}(f_1,f_6)=X_1^{\alpha_1}f_2$ and hence $NF({\rm spoly}(f_1,f_6) \vert G)=0$
\item  ${\rm spoly}(f_1,f_7)=X_1^{\alpha_1+2\alpha_{21}+1}X_3^{\alpha_3-2}-X_2^{\alpha_2+1}X_4$ if $\alpha_3+2\alpha_{21}<\alpha_2+1$. Set $h_1={\rm spoly}(f_1,f_7)$. Using $(6)$ and the fact that $\alpha_{21}+\alpha_3>2$, we can conclude that ${\rm LM}(h_1)=X_2^{\alpha_2+1}X_4$ and $T_{h_1}=\{f_5\}$ then ${\rm spoly}(h_1,f_5)=X_1^{\alpha_{21}+1}X_3^{\alpha_3-2}f_6$ and hence $NF({\rm spoly}(f_1,f_7) \vert G)=0$. \newline \noindent
$NF({\rm spoly}(f_1,f_7) \vert G)=0$ if $\alpha_3+2\alpha_{21}>\alpha_2+1$, as ${\rm LM}(f_1)$ and ${\rm LM}(f_7)$ are relatively prime.
\item $NF({\rm spoly}(f_2,f_3) \vert G)=0$ as ${\rm LM}(f_2)$ and ${\rm LM}(f_3)$ are relatively prime.
\item   ${\rm spoly}(f_2,f_4)=X_2^{\alpha_2-1}f_5$ and hence $NF({\rm spoly}(f_2,f_4) \vert G)=0$
\item   ${\rm spoly}(f_2,f_5)=f_7$ and hence $NF({\rm spoly}(f_2,f_5) \vert G)=0$
\item  $NF({\rm spoly}(f_2,f_6) \vert G)=0$ as ${\rm LM}(f_2)$ and ${\rm LM}(f_6)$ are relatively prime. 
\item  ${\rm spoly}(f_2,f_7)=X_2^{\alpha_2}f_5$ if $2\alpha_{21}+\alpha_{3}<\alpha_2+1$. Otherwise ${\rm LM}(f_2)$ and ${\rm LM}(f_7)$ are relatively prime. Hence, in both cases $NF({\rm spoly}(f_2,f_7) \vert G)=0$.
\item $NF({\rm spoly}(f_3,f_4) \vert G)=0$ as ${\rm LM}(f_3)$ and ${\rm LM}(f_4)$ are relatively prime.
\item  $NF({\rm spoly}(f_3,f_5) \vert G)=0$ as ${\rm LM}(f_3)$ and ${\rm LM}(f_4)$ are relatively prime.
\item   ${\rm spoly}(f_3,f_6)=X_1^{\alpha_1-\alpha_{21}-1}f_6$ and hence $NF({\rm spoly}(f_3,f_6) \vert G)=0$
\item   ${\rm spoly}(f_3,f_7)=X_2f_6$ if $2\alpha_{21}+\alpha_{3}<\alpha_2+1$. Otherwise ${\rm LM}(f_3)$ and ${\rm LM}(f_7)$ are relatively prime. Hence, in both cases $NF({\rm spoly}(f_3,f_7) \vert G)=0$
\item   ${\rm spoly}(f_4,f_5)=X_1X_3^{\alpha_3-1}f_2$ and hence $NF({\rm spoly}(f_4,f_5) \vert G)=0$
\item $NF({\rm spoly}(f_4,f_6) \vert G)=0$ as ${\rm LM}(f_4)$ and ${\rm LM}(f_6)$ are relatively prime.
\item $NF({\rm spoly}(f_4,f_7) \vert G)=0$ as ${\rm LM}(f_4)$ and ${\rm LM}(f_7)$ are relatively prime.
\item   ${\rm spoly}(f_5,f_6)=X_1^{\alpha_{21}+1}X_2^{\alpha_2-1}X_3^{\alpha_3}-X_1^{\alpha_{1}+\alpha_{21}}X_4$ and let $h_1= {\rm spoly}(f_5,f_6)$. If ${\rm LM}(h_1)=X_1^{\alpha_{21}+1}X_2^{\alpha_2-1}X_3^{\alpha_3}$ then $T_{h_1}=\{f_3\}$ and ${\rm spoly}(h_1,f_3)=X_1^{\alpha_1-\alpha_{21}-1}f_2$ and hence $NF({\rm spoly}(f_5,f_6) \vert G)=0$. If ${\rm LM}(h_1)= X_1^{\alpha_{1}+\alpha_{21}}X_4$ then $T_{h_1}=\{f_2\}$ and ${\rm spoly}(h_1,f_2)=X_2^{\alpha_2-1}f_3$ and hence $NF({\rm spoly}(f_5,f_6) \vert G)=0$
\item  $NF({\rm spoly}(f_5,f_7) \vert G)=0$ if $\alpha_3+2\alpha_{21}<\alpha_2+1$, as ${\rm LM}(f_5)$ and ${\rm LM}(f_7)$ are relatively prime.  Otherwise ${\rm spoly}(f_5,f_7)=X_1^{\alpha_{21}+1}X_3^{\alpha_{3}-1}f_2$ and hence $NF({\rm spoly}(f_5,f_7) \vert G)=0$
\item  ${\rm spoly}(f_6,f_7)=f_8$ if $\alpha_3+2\alpha_{21}<\alpha_2+1$ and hence $NF({\rm spoly}(f_6,f_7) \vert G)=0$. Otherwise ${\rm spoly}(f_6,f_7)=X_1^{2\alpha_{21}+1}f_3$ and hence $NF({\rm spoly}(f_6,f_7) \vert G)=0$
\end{itemize}
\begin{center}
\underline{Assume  the statement is true for $k<l$:}
\end{center} If $k$ is the smallest positive integer such that $k(\alpha_2+1)<(k-1)\alpha_1+(k+1)\alpha_{21}+\alpha_3$ then the standard basis for $I_S$ is
$$\{f_1,f_2,f_3,f_4,f_5,f_6,...,f_{6+k}\}$$
where $f_6=X_1^{\alpha_1+\alpha_{21}}-X_2^{\alpha_2}X_3$ and $f_{6+j}=X_1^{(j-1)\alpha_1+(j+1)\alpha_{21}+1}X_3^{\alpha_3-j}-X_2^{j\alpha_2+1}$ for $j=1,2,...,k$.
\begin{center}
\underline{For $k=l$:}
\end{center}
Now let $l$ be the smallest positive integer such that $l(\alpha_2+1)<(l-1)\alpha_1+(l+1)\alpha_{21}+\alpha_3$. Note that for any $j<l$, $j(\alpha_2+1)\geq (j-1)\alpha_1+(j+1)\alpha_{21}+\alpha_3$ and ${\rm LM}(f_{6+j})=X_1^{(j-1)\alpha_1+(j+1)\alpha_{21}+1}X_3^{\alpha_3-j}$ for $j=1,2,...,l-1$ and ${\rm LM}(f_{6+l})=X_2^{l\alpha_2+1}$ . Note also that $NF({\rm spoly}(f_m,f_n) \vert G)=0$ for $1\leq m <n\leq 6$ from the basis step since ${\rm LM}(f_{7})$ is not involved in calculations. Hence it is enough to prove $NF({\rm spoly}(f_m,f_n) \vert G)=0$ for all other $1\leq m <n\leq 6+ l$
\begin{itemize}
\item ${\rm spoly}(f_1,f_{6+j})=X_1^{j\alpha_1+(j+1)\alpha_{21}}X_3^{\alpha_3-j-1}-X_2^{j\alpha_2+1}X_4=h_1 $. ${\rm LM}(h_1)=X_2^{j\alpha_2+1}X_4$ by $(5)$ and $(6)$ and $T_{h_1}=\{f_5\}$.  ${\rm spoly}(h_1,f_{5})=X_1^{\alpha_{21}+1}X_2^{j\alpha_2}X_3^{\alpha_3-1}-X_1^{j\alpha_1+(j+1)\alpha_{21}+1}X_3^{\alpha_3-j-1}=h_2 $ and ${\rm LM}(h_2)=X_1^{\alpha_{21}+1}X_2^{j\alpha_2}X_3^{\alpha_3-1}$ by $(6)$. $T_{h_2}=\{f_6\}$ and  ${\rm spoly}(h_2,f_{6})=X_1^{\alpha_{1}+2\alpha_{21}}X_2^{(j-1)\alpha_{2}}X_3^{\alpha_3-2}-X_1^{j\alpha_{1}+(j+1)\alpha_{21}+1}X_3^{\alpha_3-j-1}= h_3$, ${\rm LM}(h_3)=X_1^{\alpha_{1}+2\alpha_{21}}X_2^{(j-1)\alpha_{2}}X_3^{\alpha_3-2}$ by $(6)$. $T_{h_3}=\{f_6\}$ and continuing inductively, $h_{j+1}={\rm spoly}(h_{j},f_{6})=X_1^{j\alpha_1+(j+1)\alpha_{21}}X_3^{\alpha_3-3}f_6$. Hence, $NF(\text{spoly}(f_1,f_{6+j}) \vert G)=0  $
\item ${\rm spoly}(f_2,f_{6+j})=X_1^{(j-1)\alpha_1+j\alpha_{21}+1}X_2^{\alpha_2}X_3^{\alpha_3-j}-X_2^{j\alpha_2+1}X_4=h_1 $.  ${\rm LM}(h_1)=X_2^{j\alpha_2+1}X_4$ by $(5)$ and $(6)$ and $T_{h_1}=\{f_5\}$. ${\rm spoly}(h_1,f_{5})=X_1^{\alpha_{21}+1}X_2^{j\alpha_{2}}X_3^{\alpha_3-1}-X_1^{(j-1)\alpha_1+j\alpha_{21}+1}X_2^{\alpha_{2}}X_3^{\alpha_3-j}=h_2.$ ${\rm LM}(h_2)=X_1^{\alpha_{21}+1}X_2^{j\alpha_{2}}X_3^{\alpha_3-1}$ by $(6)$. $T_{h_2}=\{f_6\}$ and  ${\rm spoly}(h_2,f_{6})$$=X_1^{\alpha_1+2\alpha_{21}+1}X_2^{(j-1)\alpha_2}X_3^{\alpha_3-2}-X_1^{(j-1)\alpha_1+j\alpha_{21}+1}X_2^{\alpha_2}X_3^{\alpha_3-j}=h_3$. ${\rm LM}(h_3)=X_1^{\alpha_1+2\alpha_{21}+1}X_2^{(j-1)\alpha_2}X_3^{\alpha_3-2}$ by $(6)$. $T_{h_3}=\{f_6\}$ and continuing inductively, we obtain $h_{j}={\rm spoly}(h_{j-1},f_{6})=X_1^{(j-2)\alpha_1+(j-1)\alpha_{21}+1}X_2^{\alpha_2}X_3^{\alpha_3-j}f_6$. Hence, $NF(\text{spoly}(f_2,f_{6+j}) \vert G)=0  $
\item ${\rm spoly}(f_3,f_{6+j})= X_1^{j\alpha_1+j\alpha_2}X_2-X_2^{j\alpha_2+1}X_3^{j}=h_1$.  ${\rm LM}(h_1)=X_2^{j\alpha_2+1}X_3^{j}$ by $(6)$ and $T_{h_1}=\{f_6\}$.  ${\rm spoly}(h_1,f_{6})=X_1^{\alpha_{1}+\alpha_{21}}X_2^{(j-1)\alpha_2+1}X_3^{j-1}-X_1^{j\alpha_1+j\alpha_{21}}X_2=h_2$. ${\rm LM}(h_2)=X_2^{(j-1)\alpha_2+1}X_3^{j-1}$ by $(6)$ and $T_{h_2}=\{f_6\}$. Continuing inductively ${\rm spoly}(h_{j-1},f_{6})=X_1^{(j-1)(\alpha_{1}+\alpha_{21})}X_2f_6=h_{j}$ and hence $NF(\text{spoly}(f_3,f_{6+j}) \vert G)=0  $

\item $NF(\text{spoly}(f_m,f_{6+j}) \vert G)=0  $ for $m=4,5$ as ${\rm LM}(f_m)$ and ${\rm LM}(f_{6+j})$ are relatively prime for all $1\leq j <l$

\item ${\rm spoly}(f_6,f_{6+j})=f_{6+(j+1)} $ and hence $NF(\text{spoly}(f_6,f_{6+j}) \vert G)=0  $ for all $1\leq j <l$
\item ${\rm spoly}(f_{6+i},f_{6+j})=X_1^{(j-i)\alpha_1+(j-i)\alpha_{21}}X_2^{i\alpha_2+1}-X_2^{j\alpha_{2}+1}X_3^{j-i}=h_1$. ${\rm LM}(h_1)=X_2^{j\alpha_{2}+1}X_3^{j-i}$ by $(6)$. $T_{h_1}=\{f_6\}$, ${\rm spoly}(h_{1},f_{6})=X_1^{\alpha_1+\alpha_{21}}X_2^{(j-1)\alpha_{2}+1}X_3^{j-i-1}-X_1^{(j-i)(\alpha_{1}+\alpha_{21})}X_2^{i\alpha_2+1}=h_2$ and ${\rm LM}(h_2)=X_1^{\alpha_1+\alpha_{21}}X_2^{(j-1)\alpha_{2}+1}X_3^{j-i-1}$ by $(6)$ and $T_{h_2}=\{f_6\}$. Continuing inductively $h_{j-i}={\rm spoly}(h_{j-i-1},f_{6})=X_1^{(j-i-1)(\alpha_1+\alpha_{21})}X_2^{i\alpha_2+1}f_6$ and hence $NF({\rm spoly}(f_{6+i},f_{6+j}) \vert G)=0$ for all  $1\leq i<j<l$.
\item $NF({\rm spoly}(f_m,f_{6+l}) \vert G)=0$ for $m=1,2,3,4,7,...,5+l$ as ${\rm LM}(f_m)$ and ${\rm LM}(f_{6+l})$ are relatively prime. 
\item ${\rm spoly}(f_5,f_{6+l})=  X_1^{\alpha_{21}+1}X_2^{l\alpha_2}X_3^{\alpha_3-1}-X_1^{(l-1)\alpha_1+(l+1)\alpha_{21}+1}X_3^{\alpha_3-l}X_4$. Set $h_1={\rm spoly}(f_5,f_{6+l})$. If ${\rm LM}(h_1)=X_1^{\alpha_{21}+1}X_2^{l\alpha_2}X_3^{\alpha_3-1}$. Then $T_{h_1}=\{f_6\}$ and ${\rm spoly}(h_1,f_6)=X_1^{\alpha_{1}+2\alpha_{21}+1}X_2^{(l-1)\alpha_2}X_3^{\alpha_3-2}-X_1^{(l-1)\alpha_1+(l+1)\alpha_{21}+1}X_3^{\alpha_3-l}X_4=h_2$. ${\rm LM}(h_2)=X_1^{(l-1)\alpha_1+(l+1)\alpha_{21}+1}X_3^{\alpha_3-l}X_4$ since $(l-1)(\alpha_2+1)\geq (l-2)\alpha_1+l\alpha_{21}+\alpha_3$ and $(5)$. Then $T_{h_2}=\{f_1,f_2\}$ and since ${\rm ecart}(f_2)$ is minimal by $(6)$,  ${\rm spoly}(h_2,f_2)=X_1^{\alpha_1+2\alpha_{21}+1}X_2^{(l-1)\alpha_2}X_3^{\alpha_3-2}-X_1^{(l-1)\alpha_1+l\alpha_{21}+1}X_2^{\alpha_2}X_3^{\alpha_3-l}=h_3$ and ${\rm LM}(h_3)=X_1^{\alpha_1+2\alpha_{21}+1}X_2^{(l-1)\alpha_2}X_3^{\alpha_3-2}$ by $(6)$.$T_{h_3}=\{f_6\}$ and  ${\rm spoly}(h_3,f_6)=X_1^{2\alpha_{1}+3\alpha_{21}+1}X_2^{(l-2)\alpha_2}X_3^{\alpha_3-3}-X_1^{(l-1)\alpha_1+l\alpha_{21}+1}X_2^{\alpha_2}X_3^{\alpha_3-l}=h_4$ and ${\rm LM}(h_4)=X_1^{2\alpha_{1}+3\alpha_{21}+1}X_2^{(l-2)\alpha_2}X_3^{\alpha_3-3}$ by $(6)$. $T_{h_4}=\{f_6\}$ and  ${\rm spoly}(h_4,f_6)=X_1^{3\alpha_{1}+4\alpha_{21}+1}X_2^{(l-3)\alpha_2}X_3^{\alpha_3-4}-X_1^{(l-1)\alpha_1+l\alpha_{21}+1}X_2^{\alpha_2}X_3^{\alpha_3-l}=h_5$ and ${\rm LM}(h_5)=X_1^{3\alpha_{1}+4\alpha_{21}+1}X_2^{(l-3)\alpha_2}X_3^{\alpha_3-4}$ by $(6)$ and $T_{h_5}=\{f_6\}$. Continuing inductively, $h_l={\rm spoly}(h_{l-1},f_6)=X_1^{(l-2)\alpha_{1}+(l-1)\alpha_{21}+1}X_2^{\alpha_2}X_3^{\alpha_3-l}f_6$ and hence $NF(\text{spoly}(f_5,f_{6+l}) \vert G)=0$\newline
Otherwise $T_{h_1}=\{f_1,f_2\}$ and ${\rm ecart}(f_2)$ is minimal by $(6)$. ${\rm spoly}(h_1,f_2)=X_1^{(l-1)\alpha_1+l\alpha_{21}+1}X_2^{\alpha_2}X_3^{\alpha_3-l}-X_1^{\alpha_{21}+1}X_2^{l\alpha_2}X_3^{\alpha_3-1}=h_2 $ and ${\rm LM}(h_2)=X_1^{\alpha_{21}+1}X_2^{l\alpha_2}X_3^{\alpha_3-1}$ by $(6)$. Since $T_{h_2}=\{f_6\}$, we compute  ${\rm spoly}(h_2,f_6)=X_1^{(l-1)\alpha_1+l\alpha_{21}+1}X_2^{\alpha_2}X_3^{\alpha_3-l}-X_1^{\alpha_1+2\alpha_{21}+1}X_2^{(l-1)\alpha_2}X_3^{\alpha_3-2}=h_3$, $T_{h_3}=\{f_6\}$. Continuing inductively, $h_l=$ ${\rm spoly}(h_{l-1},f_6)=X_1^{(l-1)\alpha_1+l\alpha_{21}+1}X_2^{\alpha_2}X_3^{\alpha_3-l}-X_1^{(l-2)\alpha_1+(l-1)\alpha_{21}+1}X_2^{2\alpha_2}X_3^{\alpha_3-l+1}$$=X_1^{(l-2)\alpha_1+(l-1)\alpha_{21}+1}X_2^{\alpha_2}X_3^{\alpha_3-l}f_6$ and hence $NF(\text{spoly}(f_5,f_{6+l}) \vert G)=0$
\item ${\rm spoly}(f_6,f_{6+l})=X_1^{\alpha_1+\alpha_{21}}f_{6+(l-1)}  $ hence, $NF(\text{spoly}(f_6,f_{6+l}) \vert G)=0$. 
\end{itemize}
Since all normal forms reduce to zero, $\{{f_1},{f_2}, f_3, f_4, f_5,{f_6},...,{f_{6+k}}\}$ is a standard basis  for $I_C$ 
\end{proof}
\begin{corollary}
$\{{f_1}^*,{f_2}^*,...,{f_6}^*,...,{f_{6+k}}^*\}$ is a standard basis  for $I_C^*$  where  ${f_1}^*=X_3X_4$, ${f_2}^*=X_1^{\alpha_{21}}X_4$,  ${f_3}^*=X_3^{\alpha_3}$, ${f_4}^*=X_4^{2}$, ${f_5}^*=X_2X_4$, ${f_6}^*=X_2^{\alpha_2}X_3$, $f_{6+k}^*=X_2^{k\alpha_2+1}$ and ${f_{6+j}}^*=X_1^{(j-1)\alpha_1+(j+1)\alpha_{21}+1}X_3^{\alpha_3-j}$ for $j=1,2,...,k-1$. Since $X_1 | {f_2}^*$, the tangent cone is not Cohen-Macaulay by the criterion given in \cite{AMS} as expected.
\end{corollary}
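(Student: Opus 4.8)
The plan is to deduce the corollary from the theorem together with the standard transfer principle underlying the tangent-cone computation of \cite{bayer}: if $G$ is a standard basis of $I_C=I_S$ with respect to the local degree ordering used with NFMORA, then the set of least homogeneous summands $\{g^* : g\in G\}$ is itself a standard basis of the tangent-cone ideal $I_C^*$, now with respect to the associated homogeneous ordering. Since the theorem already asserts that $\{f_1,\dots,f_{6+k}\}$ is a standard basis of $I_C$, the first move is simply to invoke this principle; no new $s$-polynomial computations are required.

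The key computational step is then to read off each $f_i^*$. For a local degree ordering one has ${\rm LM}(f)={\rm LM}(f^*)$, so the leading monomials already determined in proving the theorem pin down the lowest-degree summands. Concretely, using $\alpha_4=2$ and the inequalities $(1)$--$(6)$ together with the defining property of $k$: from $\alpha_1>\alpha_4=2$ we get $f_1^*=X_3X_4$; from $(4)$, $f_2^*=X_1^{\alpha_{21}}X_4$; from $(2)$, $f_3^*=X_3^{\alpha_3}$; from $(3)$, $f_4^*=X_4^{2}$; from $(5)$, $f_5^*=X_2X_4$; from $(6)$, $f_6^*=X_2^{\alpha_2}X_3$; for $1\le j\le k-1$ the inequality $j(\alpha_2+1)\ge (j-1)\alpha_1+(j+1)\alpha_{21}+\alpha_3$ gives $f_{6+j}^*=X_1^{(j-1)\alpha_1+(j+1)\alpha_{21}+1}X_3^{\alpha_3-j}$; and the defining strict inequality for $k$ gives $f_{6+k}^*=X_2^{k\alpha_2+1}$. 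In the cases arising from the strict inequalities $(1)$--$(5)$ and from the definition of $k$ the two terms of the binomial $f_i$ have distinct degrees, so $f_i^*$ is exactly the displayed monomial; in the borderline equal-degree cases, which can only occur for $f_6$ or for $f_{6+j}$ with $j<k$, one keeps the full binomial, whose leading monomial is nonetheless the listed one, so the leading-term ideal $L(I_C^*)$ is unaffected.

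With the $f_i^*$ in hand the Cohen-Macaulayness conclusion is immediate. Because $\alpha_{21}>0$, the variable $X_1$ divides $f_2^*=X_1^{\alpha_{21}}X_4$, a member of the standard basis of $I_C^*$; equivalently, $X_1$ divides a minimal generator of the leading-term ideal of $I_C^*$. By the criterion of \cite{AMS} this means $X_1$ fails to be a nonzerodivisor on $gr_{\mathfrak{m}}(R_S)=A/I_C^*$, and hence the tangent cone is not Cohen-Macaulay.

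The only genuinely non-routine ingredient is the transfer principle invoked at the start, namely that passing to lowest-degree forms of a standard basis with respect to a local degree ordering produces a standard basis of $I_C^*$. I would treat this as a cited black box, since it is precisely what makes the algorithm of \cite{bayer} compute the tangent cone; once it is granted, the remaining work is the purely bookkeeping extraction of the $f_i^*$, which is handled entirely by the leading-monomial data established in the proof of the theorem.
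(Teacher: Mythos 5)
Your proposal is correct and follows exactly the route the paper intends: the paper states this corollary without proof, as an immediate consequence of the theorem via the standard fact that the lowest-degree forms of a standard basis (with respect to a local degree ordering) give a standard basis of $I^*$, followed by reading off each $f_i^*$ from the degree inequalities $(1)$--$(6)$ and the definition of $k$, and applying the criterion of \cite{AMS} to $f_2^*$. Your explicit treatment of the borderline equal-degree cases (where $(6)$ or $j(\alpha_2+1)\geq(j-1)\alpha_1+(j+1)\alpha_{21}+\alpha_3$ holds with equality, so $f_i^*$ is a binomial whose leading monomial is still the listed one) is a point the paper glosses over, and is a welcome refinement rather than a deviation.
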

%\section{when $\alpha_4=3$}
%\begin{theorem}\label{main}
%Let $S=\langle n_1,n_2,n_3,n_4 \rangle$ be a 4-generated pseudosymmetric numerical semigroup with $n_1<n_2<n_3<n_4$ and $\alpha_2>\alpha_{21}+1$. If $\alpha_4=3$ then the standard basis for $I_S$ is
%\begin{enumerate}
%\item $\{f_1,f_2,f_3,f_4,f_5,f_6,f_7,f_8,f_9,f_{10}\}$ if $\alpha_1+\alpha_{21}>2\alpha_2+1>3\alpha_{21}+3$
%\item $\{f_1,f_2,f_3,f_4,f_5,f_6,f_7,f_8,f_9\}$ otherwise
%\end{enumerate} 
%where $f_6=X_2^{\alpha_2}X_3X_4-X_1^{\alpha_1+\alpha_{21}}$, $f_7=X_2^{\alpha_2+1}X_4-X_1^{2\alpha_{21}+1}X_3^{\alpha_3-1}$, $f_8=X_2^{2\alpha_2}X_3-X_1^{\alpha_1+2\alpha_{21}}$, $f_9=X_2^{2\alpha_2+1}-X_1^{3\alpha_{21}+1}X_3^{\alpha_3-1}$, $f_{10}=X_1^{\alpha_1+5\alpha_{21}+1}X_3^{\alpha_3-2}-X_2^{4\alpha_2+1}$
%\end{theorem}

%\begin{proof} 
%We will apply standard basis algorithm with NFM\tiny{ORA} \normalsize  as the normal form algorithm, see \cite{greuel-pfister}.

%\end{proof}

\section{Hilbert Function}\label{3}
\begin{theorem}
The numerator of the Hilbert series of the local ring $R_S$ is 
$$P(I_S^*)=1-3t^2+3t^3-t^4-t^{\alpha_{21}+1}(1-t)^3-t^{\alpha_3}(1-t)-t^{\alpha_2+1}(1-t)(1-t^{\alpha_3-1})-t^{k\alpha_2+1}(1-t)^2-r(t)$$ where $r(t)=0$ if $k=1$ and $r(t)=\sum_{j=2}^{k} t^{(k-j)\alpha_1+(k-j+2)\alpha_{21}+\alpha_3+j-k}(1-t)^2(1-t^{\alpha_2})$ otherwise.
\end{theorem}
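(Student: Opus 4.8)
The plan is to compute the Hilbert series of $A/I_S^*$ directly from the monomial generators supplied by the Corollary, by building $I_S^*$ one generator at a time and using the colon-ideal short exact sequence for monomial ideals. Recall that for a monomial ideal $I$ and a monomial $m$ of degree $d$, multiplication by $m$ gives the graded exact sequence
$$0 \to \frac{A}{(I:m)}(-d) \xrightarrow{\,\cdot m\,} \frac{A}{I} \to \frac{A}{I+\langle m\rangle} \to 0,$$
so that $HS_{A/(I+\langle m\rangle)}(t) = HS_{A/I}(t) - t^{d}\,HS_{A/(I:m)}(t)$. Working over the common denominator $(1-t)^4$, each adjoined generator $m$ subtracts the single term $t^{d}\,Q_m(t)$ from the numerator, where $Q_m(t)$ is the numerator of $HS_{A/(I:m)}$; summing these recovers $P(I_S^*)$.

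First I would isolate the three degree-two generators and observe that $J:=\langle f_1^*,f_4^*,f_5^*\rangle = X_4\cdot\langle X_2,X_3,X_4\rangle$, whose minimal resolution is the Koszul complex on $X_2,X_3,X_4$ shifted by one degree; this yields the base numerator $1-3t^2+3t^3-t^4$ of $A/J$ and accounts for the first four terms. I would then adjoin the remaining generators in the order $f_2^*, f_3^*, f_6^*, f_7^*,\dots,f_{6+k}^*$, applying the exact sequence at each stage. The crux is that every relevant colon ideal collapses to a very simple form: I expect
$$(J:f_2^*)=\langle X_2,X_3,X_4\rangle,\qquad (\,\cdot:f_3^*)=\langle X_4\rangle,\qquad (\,\cdot:f_6^*)=\langle X_4,X_3^{\alpha_3-1}\rangle,$$
$$(\,\cdot:f_{6+j}^*)=\langle X_2^{\alpha_2},X_3,X_4\rangle\ \ (1\le j\le k-1),\qquad (\,\cdot:f_{6+k}^*)=\langle X_3,X_4\rangle,$$
each colon taken against the ideal generated by all previously adjoined monomials. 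The quotients are then elementary rings ($K[X_1]$, $K[X_1,X_2,X_3]$, $K[X_1,X_2,X_3]/(X_3^{\alpha_3-1})$, $K[X_1,X_2]/(X_2^{\alpha_2})$, and $K[X_1,X_2]$), and expressing their Hilbert series over $(1-t)^4$ produces exactly the factors $(1-t)^3$, $(1-t)$, $(1-t)(1-t^{\alpha_3-1})$, $(1-t)^2(1-t^{\alpha_2})$, and $(1-t)^2$ of the statement. The $k-1$ identical factors $(1-t)^2(1-t^{\alpha_2})$ coming from the $f_{6+j}^*$ collect, after reindexing $j\mapsto k-j+1$, into $r(t)$.

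The main obstacle is verifying those colon-ideal simplifications, since each requires computing $(g:m)=g/\gcd(g,m)$ for every generator $g$ already present and then discarding the redundant monomials using the inequalities $(1)$–$(6)$ and the minimality of $k$ (which forces $\alpha_3-j\ge 1$ throughout the range and fixes the leading-term pattern of the $f_{6+j}$). The most delicate point is the \emph{uniform} behaviour of $(\,\cdot:f_{6+j}^*)$ across the whole family $j=1,\dots,k-1$: I must show that the $X_3$-powers obtained by colon-ing $X_3^{\alpha_3}, f_{6+1}^*,\dots,f_{6+j-1}^*$ against $f_{6+j}^*$ always include $X_3$ itself — arising from $X_3^{\alpha_3}$ when $j=1$, and from the immediately preceding $f_{6+j-1}^*$ (whose $X_3$-degree exceeds that of $f_{6+j}^*$ by exactly one) when $j\ge 2$ — so that all higher $X_3$-powers are subsumed and the colon stabilizes at $\langle X_2^{\alpha_2},X_3,X_4\rangle$ at every stage. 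Once this stabilization is established, the telescoping of the numerators and the final reindexing into $r(t)$ are routine.
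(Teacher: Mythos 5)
Your proposal is correct and is essentially the paper's own argument: the colon-ideal exact sequence you invoke is precisely the Bayer--Stillman proposition the paper applies, and your colon ideals $\langle X_2,X_3,X_4\rangle$, $\langle X_4\rangle$, $\langle X_4,X_3^{\alpha_3-1}\rangle$, $\langle X_2^{\alpha_2},X_3,X_4\rangle$, $\langle X_3,X_4\rangle$ are exactly the ones the paper computes (including the stabilization across $j=1,\dots,k-1$ and the reindexing into $r(t)$), just encountered in the opposite order, since you adjoin generators bottom-up while the paper deletes them top-down. The only cosmetic difference is that you get the base numerator $1-3t^2+3t^3-t^4$ in one stroke from the shifted Koszul resolution of $X_4\langle X_2,X_3,X_4\rangle$, where the paper performs two further colon steps on $X_2X_4$ and $X_4^2$.
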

\begin{proof}
To compute the Hilbert series, we will use Algorithm 2.6 of \cite{bayer} that is formed by continuous use of the proposition 

"If $I$ is a monomial ideal with $I=<J,w>$, then the numerator of the Hilbert series of $A/I$ is $P(I)=h(J)-t^{\deg w}h(J:w)$ where $w$ is a monomial and $\deg w$ is the total degree of $w$."

Let $P(I_S^*)$ denote the numerator of the Hilbert series of $A/{I^*_S}$
\begin{itemize}
\item Let $w_1=X_2^{k\alpha_2+1}$, then $$J_1=<X_3X_4,X_1^{\alpha_{21}}X_4,X_3^{\alpha_3}, X_4^{2}, X_2X_4, X_2^{\alpha_2}X_3, X_1^{2\alpha_{21}+1}X_3^{\alpha_3-1},...,X_1^{(k-2)\alpha_1+(k)\alpha_{21}+1}X_3^{\alpha_3-k+1} >,$$
$P(I_S^*)=P(J_1)-t^{k\alpha_2+1}P(<X_4,X_3>)=P(J_1)-t^{k\alpha_2+1}(1-t)^2$
\end{itemize}
If $k=1$ then $w_2=w_{k+1}$, otherwise:
\begin{itemize}
\item Let $w_2=X_1^{(k-2)\alpha_1+(k)\alpha_{21}+1}X_3^{\alpha_3-k+1}$, then $$J_2=<X_3X_4,X_1^{\alpha_{21}}X_4,X_3^{\alpha_3}, X_4^{2}, X_2X_4, X_2^{\alpha_2}X_3, X_1^{2\alpha_{21}+1}X_3^{\alpha_3-1},...,X_1^{(k-3)\alpha_1+(k-1)\alpha_{21}+1}X_3^{\alpha_3-k+2} >,$$
\begin{eqnarray*}
P(J_1)&=&P(J_2)-t^{(k-2)\alpha_{1}+k(\alpha_{21}-1)+\alpha_3+2}P(<X_4,X_2^{\alpha_2},X_3>)\\
&=&P(J_2)-t^{(k-2)\alpha_{1}+k(\alpha_{21}-1)+\alpha_3+2}(1-t)^2(1-t^{\alpha_2})
\end{eqnarray*}
\item Continue inductively and let $w_k=X_1^{2\alpha_{21}+1}X_3^{\alpha_3-1}$, then $$J_{k}=<X_3X_4,X_1^{\alpha_{21}}X_4,X_3^{\alpha_3}, X_4^{2}, X_2X_4, X_2^{\alpha_2}X_3 >,$$
$P(J_{k-1})=P(J_k)-t^{2\alpha_{21}+\alpha_{3}}P(<X_4,X_2^{\alpha_2},X_3>)=P(J_k)-t^{2\alpha_{21}+\alpha_{3}}(1-t)^2(1-t^{\alpha_2})$
\item Let $w_{k+1}=X_2^{\alpha_2}X_3$, then 
$$J_{k+1}=<X_3X_4,X_1^{\alpha_{21}}X_4,X_3^{\alpha_3}, X_4^{2}, X_2X_4 >,$$
$P(J_{k})=P(J_{k+1})-t^{\alpha_2+1}P(<X_4,X_3^{\alpha_3-1}>)=P(J_{k+1})-t^{\alpha_2+1}(1-t)(1-t^{\alpha_3-1})$
\item Let $w_{k+2}=X_3^{\alpha_3}$, then 
$$J_{k+2}=<X_3X_4,X_1^{\alpha_{21}}X_4, X_4^{2}, X_2X_4 >,$$
$P(J_{k+1})=P(J_{k+2})-t^{\alpha_3}P(<X_4>)=P(J_{k+2})-t^{\alpha_3}(1-t)$
\item Let $w_{k+3}=X_1^{\alpha_{21}}X_4$, then 
$$J_{k+3}=<X_3X_4, X_4^{2}, X_2X_4>$$
$P(J_{k+2})=P(J_{k+3})-t^{\alpha_{21}+1}h(<X_3,X_4,X_2>)=P(J_{k+3})-t^{\alpha_{21}+1}(1-t)^3$
\item Let $w_{k+4}=X_2X_4$, then 
$$J_{k+4}=<X_3X_4, X_4^{2}>$$
$P(J_{k+3})=P(J_{k+4})-t^{2}P(<X_3,X_4>)=P(J_{k+4})-t^{2}(1-t)^2$
\item Let $w_{k+5}=X_4^{2}$, then 
$$J_{k+5}=<X_3X_4>$$
$P(J_{k+4})=P(J_{k+5})-t^{2}P(<X_3>)=(1-t^2)-t^2(1-t)$
\end{itemize} 
Hence, $P(I_S^*)=(1-t^2)-t^2(1-t)-t^{2}(1-t)^2-t^{\alpha_{21}+1}(1-t)^3-t^{\alpha_3}(1-t)-t^{\alpha_2+1}(1-t)(1-t^{\alpha_3-1})-t^{k\alpha_2+1}(1-t)^2-r(t) $ where $r(t)=0$ if $k=1$ and $r(t)=\sum_{j=2}^{k} t^{(k-j)\alpha_1+(k-j+2)\alpha_{21}+\alpha_3+j-k}(1-t)^2(1-t^{\alpha_2})$ if $k>1$
\end{proof}
Clearly, since the krull dimension is one, if there are no negative terms in second Hilbert Series, then the Hilbert function will be non-decreasing. We can state and prove the next theorem.
\begin{theorem}
The local ring $R_S$ has a non-decreasing Hilbert function if $k=1$.
\end{theorem}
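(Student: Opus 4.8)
The plan is to use the observation recorded just before the statement: since $R_S$ has Krull dimension one, the coefficients of the second Hilbert series $Q(t)=P(I_S^*)/(1-t)^3$ are exactly the first differences $H_{R_S}(n)-H_{R_S}(n-1)$, so it suffices to prove that $Q(t)$ has no negative coefficient. I would therefore set $k=1$ in the previous theorem (so that $r(t)=0$ and $k\alpha_2+1=\alpha_2+1$) and carry out the division by $(1-t)^3$ explicitly.

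The first technical step is the simplification. Using $1-3t^2+3t^3-t^4=(1-t)(1+t-2t^2+t^3)$ together with the identity $\dfrac{1+t-2t^2+t^3}{(1-t)^2}=\dfrac{1}{(1-t)^2}+t$, and cancelling one factor $(1-t)$ from each of the remaining summands of $P(I_S^*)$, the division collapses to the closed form
$$Q(t)=\frac{1-t^{\alpha_3}}{(1-t)^2}+t-t^{\alpha_{21}+1}-\frac{t^{\alpha_2+1}\bigl(2+t+t^2+\cdots+t^{\alpha_3-2}\bigr)}{1-t}.$$
Here the two rational terms are individually power series, but they combine with $t-t^{\alpha_{21}+1}$ into the polynomial $Q$, as they must. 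I would then read off the coefficient of $t^n$: the first fraction contributes $\min(n+1,\alpha_3)$, i.e. the nondecreasing staircase $1,2,\dots,\alpha_3,\alpha_3,\dots$ that climbs to the plateau height $\alpha_3\ge 2$; the last fraction subtracts $\min(n-\alpha_2+1,\alpha_3)$ for $n\ge\alpha_2+1$ and nothing for $n<\alpha_2+1$; and the middle terms only add $1$ at degree $1$ and subtract $1$ at degree $\alpha_{21}+1$.

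The sign check is then immediate. For $n\ge\alpha_2+1$ the coefficient is $\min(n+1,\alpha_3)-\min(n-\alpha_2+1,\alpha_3)$, and since $n+1>n-\alpha_2+1$ and $x\mapsto\min(x,\alpha_3)$ is nondecreasing, this difference is $\ge 0$; this disposes of the entire descending range with no case distinction. For $n\le\alpha_2$ the coefficient is $\min(n+1,\alpha_3)$ adjusted only by $+1$ at $n=1$ and $-1$ at $n=\alpha_{21}+1$, and the single potentially dangerous subtraction occurs at $n=\alpha_{21}+1$. Because $\alpha_2>\alpha_{21}+1$ forces $\alpha_{21}+1\le\alpha_2$, this degree lies in the staircase region (strictly below $\alpha_2+1$, so untouched by the last fraction), where the coefficient before subtraction is $\min(\alpha_{21}+2,\alpha_3)\ge 2$ (using $\alpha_{21}\ge 1$ and $\alpha_3\ge 2$); after subtracting $1$ it is still $\ge 1$. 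Hence every coefficient of $Q(t)$ is non-negative, and $H_{R_S}$ is non-decreasing.

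I expect the main obstacle to be purely computational: correctly cancelling $(1-t)^3$ against the four ``non-divisible-looking'' summands $1-3t^2+3t^3-t^4$, $\,-t^{\alpha_3}(1-t)$, and the two $t^{\alpha_2+1}$ pieces, and verifying that the partial-sum polynomial $2+t+\cdots+t^{\alpha_3-2}$ is exactly what survives (the constant $2$ rather than $1$ is the delicate point). Once the closed form above is in hand, the monotonicity-of-$\min$ observation makes non-negativity essentially free and, pleasantly, eliminates any need to track the relative order of $\alpha_3$, $\alpha_2+1$, and $\alpha_{21}+1$, which is not pinned down by the hypotheses.
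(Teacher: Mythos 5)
Your proof is correct. I checked the closed form: dividing each summand of $P(I_S^*)$ (with $k=1$, so $r=0$) by $(1-t)^3$ gives $\frac{1}{(1-t)^2}+t-t^{\alpha_{21}+1}-\frac{t^{\alpha_3}}{(1-t)^2}-\frac{t^{\alpha_2+1}(1+t+\cdots+t^{\alpha_3-2})}{1-t}-\frac{t^{\alpha_2+1}}{1-t}$, which is exactly your expression, and your coefficient extraction (the staircase $\min(n+1,\alpha_3)$, the shifted staircase $\min(n-\alpha_2+1,\alpha_3)$ starting at degree $\alpha_2+1$, and the $\pm 1$ adjustments at degrees $1$ and $\alpha_{21}+1$) is sound, with the needed inequalities $\alpha_{21}\ge 1$, $\alpha_3\ge 2$, $\alpha_{21}+1<\alpha_2$ all available from the standing hypotheses. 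Your overall strategy is the same as the paper's — reduce, via the dimension-one observation, to non-negativity of the coefficients of $Q=P(I_S^*)/(1-t)^3$, and use $\alpha_2>\alpha_{21}+1$ to neutralize $-t^{\alpha_{21}+1}$ — but the execution is genuinely different. The paper divides by $(1-t)$ three times symbolically, regrouping into geometric sums at each stage, and arrives (for $k=1$) at
$$Q(t)=1+t+t(1+t+\cdots+t^{\alpha_2-1})+t(1+t+\cdots+t^{\alpha_2-1})(1+t+\cdots+t^{\alpha_3-2})-t^{\alpha_{21}+1},$$
in which every summand except $-t^{\alpha_{21}+1}$ is a manifestly non-negative polynomial; since $t^{\alpha_{21}+1}$ occurs among $t+t^2+\cdots+t^{\alpha_2}$, the sign check is a one-line cancellation and no coefficient bookkeeping is needed — in particular the range $n\ge\alpha_2+1$, which costs you the min-monotonicity argument, never has to be examined, because those negative contributions are absorbed inside the non-negative product. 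What your route buys in exchange is an explicit formula for each coefficient of $Q$ (hence for each difference $H_{R_S}(n)-H_{R_S}(n-1)$) and, as you note, indifference to the relative order of $\alpha_3$, $\alpha_{21}+1$, $\alpha_2+1$. What the paper's route buys is that the factorization is carried out for general $k$ at once, which feeds its subsequent remark on the case $k>1$; your computation is tailored to $k=1$, which is perfectly legitimate for this statement but does not generalize as directly.
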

\begin{proof}
Observe that $$P(I_S^*)=(1-t)P_1(t)$$ with $P_1(t)=1+t-t^2-t^{2}(1-t)-t^{\alpha_{21}+1}(1-t)^2-t^{\alpha_3}-t^{\alpha_2+1}(1-t^{\alpha_3-1})-t^{k\alpha_2+1}(1-t)-r_1(t) $ where $r_1(t)=0$ if $k=1$ and $r_1(t)=\sum_{j=2}^{k} t^{(k-j)\alpha_1+(k-j+2)\alpha_{21}+\alpha_3+j-k}(1-t)(1-t^{\alpha_2})$ if $k>1$.
$$P_1(t)=(1-t)P_2(t)$$ with $P_2(t)=1+t+t(1+t+...+t^{\alpha_3-2})-t^2-t^{\alpha_{21}+1}(1-t)-t^{\alpha_2+1}(1+t+...+t^{\alpha_3-2})-t^{k\alpha_2+1}-r_2(t)$ where $r_2(t)=0$ if $k=1$ and $r_2(t)=\sum_{j=2}^{k} t^{(k-j)\alpha_1+(k-j+2)\alpha_{21}+\alpha_3+j-k}(1-t^{\alpha_2})$ if $k>1$. Combining some terms,
$P_2(t)=1-t^2+t(1-t^{k\alpha_2})+t(1-t^{\alpha_2})(1+t+...+t^{\alpha_3-2})-t^{\alpha_{21}+1}(1-t)-r_2(t)$ which shows that,
$$P_2(t)=(1-t)Q(t)$$
with 
$$Q(t)=1+t+t(1+t+...+t^{k\alpha_2-1})+t(1+t+...+t^{\alpha_2-1})(1+t+...+t^{\alpha_3-2})-t^{\alpha_{21}+1}+r_3(t)$$
where $r_3(t)=0$ if $k=1$ and $r_3(t)=\sum_{j=2}^{k} t^{(k-j)\alpha_1+(k-j+2)\alpha_{21}+\alpha_3+j-k}(1+t+...+t^{\alpha_2-1})$ if $k>1$.Then 
$HS_{R}(t)=\frac{P(t)}{(1-t)^4}=\frac{Q(t)}{(1-t)}$, and $Q(t)=1+t+t(1+t+...+t^{k\alpha_2-1})+t(1+t+...+t^{\alpha_2-1})(1+t+...+t^{\alpha_3-2})-t^{\alpha_{21}+1}+r_3(t)$ is the second Hilbert series of the local ring. Note that $\alpha_2>\alpha_{21}+1$ and $-t^{\alpha_{21}+1}$ disappear when we expand $Q(t)$. Hence for $k=1$, there are no negative coefficients in the second Hilbert series, the Hilbert series in non-decreasing.
\end{proof}
\begin{remark}
 For $k>1,$ $$Q(t)=1+t-t^{\alpha_{21}+1}+(1+t+...+t^{\alpha_2-1})\left[t(2+t+...+t^{\alpha_3-2})+\sum_{j=1}^{k-1}S_j(t) \right]$$
 where $S_j(t)=t^{(j)\alpha_{2}+1}-t^{(j-1)\alpha_1+(j+1)\alpha_{21}+\alpha_3+1-j}$. Recall that  that $k$ is the smallest positive integer such that $k(\alpha_2+1)<(k-1)\alpha_1+(k+1)\alpha_{21}+\alpha_3$ hence for any $1\leq j \leq k-1$, we have $j(\alpha_2+1)\geq (j-1)\alpha_1+(j+1)\alpha_{21}+\alpha_3$, which means $S_j(t)\geq 0$ for ant $t\in \mathbb{N}$.
 
\end{remark}

\section{Examples}\label{4}
Above examples are done via the computer algebra system SINGULAR, \cite{singular}.
\begin{example}For $\alpha_{21}=8 , \alpha_1=16 , \alpha_2=20 ,\alpha_3=7, \alpha_{4}=2 $, $k=1$ and the corresponding standard basis is $\{ f_1=X_1^{16}-X_3X_4,
f_2=X_2^{20}-X_1^8X_4,
f_3=X_3^7-X_1^7X_2,
f_4=X_4^2-X_1X_2^{19}X_3^6,
f_5=X_1^9X_3^6-X_2X_4,
f_6=X_1^{24}-X_2^{20}X_3,
f_7=X_1^{17}X_3^6-X_2^{21} \}$. Numerator of the Hilbert series of the tangent cone is $P(I_S^*)=1-3t^2+3t^3-t^4-t^7+t^8-t^9+3t^{10}-3t^{11}+t^{12}-2t^{21}+3t^{22}-t^{23}+t^{27}-t^{28}$. Direct computation shows that the Hilbert series is non-decreasing.
\end{example}
\begin{example}
For $\alpha_{21}=4$, $\alpha_{1}=22$,$\alpha_{2}=13$, $\alpha_{3}=5$, $\alpha_{4}=2$, we have $k=2$ and the corresponding standard basis is $\{ f_1=X_1^{22}-X_3X_4
,f_2=X_2^{13}-X_1^4X_4
,f_3=X_3^5-X_1^{17}X_2
,f_4=X_4^2-X_1X_2^{12}X_3^4
,f_5=X_1^5X_3^4-X_2X_4
,f_6=X_1^{26}-X_2^{13}X_3
,f_7=X_1^9X_3^4-X_2^{14}
,f_8=X_1^{35}X_3^3-X_2^{27}  \}$.  Numerator of the Hilbert series of the tangent cone is $P(I_S^*)=1-3t^2+3t^3-t^4-2t^5+4t^6-3t^7+t^8-t^{13}+t^{14}+t^{18}-t^{19}+t^{26}-3t^{27}+3t^{28}-t^{29}.$ Direct computation shows that the Hilbert series is non-decreasing.

\end{example}
\begin{example}
For $\alpha_{21}=10$, $\alpha_{1}=17$,$\alpha_{2}=25$, $\alpha_{3}=4$, $\alpha_{4}=2$, we have $k=3$ and the corresponding standard basis is $\{f_1= X_1^{17}-X_3X_4,f_2= X_2^{25}-X_1^{10}X_4,f_3= X_3^4-X_1^6X_2, f_4=X_4^2-X_1X_2^{24}X_3^3,f_5= X_1^{11}X_3^3-X_2X_4, f_6=X_1^{27}-X_2^{25}X_3, f_7=X_2^{26}-X_1^{21}X_3^3, f_8=X_1^{48}X_3^2-X_2^{51},f_9=X_1^{75}X_3-X_2^{76}\}$.  Numerator of the Hilbert series of the tangent cone is $P(I_S^*)=1-3t^2+3t^3-2t^4+t^5-t^{11}+3t^{12}-3t^{13}+t^{14}-t^{24}+2t^{25}-2t^{26}+t^{27}+t^{29}-t^{30}+t^{49}-3t^{50}+3t^{51}-t^{52}+t^{75}-3t^{76}+3t^{77}-t^{78}$. Direct computation shows that the Hilbert series is non-decreasing.
\end{example}

\begin{example}For $\alpha_{21}=3$, $\alpha_{1}=13$,$\alpha_{2}=14$, $\alpha_{3}=6$, $\alpha_{4}=2$, we have $k=4$ and the corresponding standard basis becomes $\{ f_1=X_1^{13}-X_3X_4,
f_2=X_2^{14}-X_1^3X_4,
f_3=X_3^6-X_1^9X_2,
f_4=X_4^2-X_1X_2^{13}X_3^5,
f_5=X_1^4X_3^5-X_2X_4,
f_6=X_1^{16}-X_2^{14}X_3,
f_7=X_1^7X_3^5-X_2^{15},
f_8=X_1^{23}X_3^4-X_2^{29},
f_9=X_1^{39}X_3^3-X_2^{43},
f_{10}=X_1^{55}X_3^2-X_2^{57} \}$.  Numerator of the Hilbert series of the tangent cone is $P(I_S^*)=1-3t^2+3t^3-2t^4+3 t^{5}-4t^{6}+2t^{7}-t^{12}+2t^{13}-t^{14}-t^{15}+t^{16}+t^{20}-t^{21}+t^{26}-3t^{27}+3t^{28}-t^{29}+t^{41}-3t^{42}+3t^{43}-t^{44}+t^{56}-3t^{57}+3t^{58}-t^{59}$. Direct computation shows that the Hilbert series is non-decreasing.
\end{example}
%\begin{table}
%\caption{Examples of each case}\label{tab:1}
%\centering
%\begin{tabular}{|c|c|c|c|c||c|c|c|c||c|c|c|c|}
%  \hline
  % after \\: \hline or \cline{col1-col2} \cline{col3-col4} ...
%  $\alpha_{21}$&$\alpha_1$& $\alpha_2$ & $\alpha_3$& $\alpha_4$&$n_1$&$n_2$&$n_3$&$n_4$&$\beta_0$&$\beta_1$&$\beta_2$&$\beta_3$\\ 
%  \hline
%  2 &5 &3 &2 &2 &7 &12 & 13 & 22 &1 &5 & 6 & 2\\
% \hline
% 2&4&4&2&4&25&19&22&26&1 &5 & 6 & 2\\
%  \hline
% 2&4&4&2&5&33&23&28&26&1 &5 & 7 & 3\\
% \hline
%  2&5&4&2&4&25&20&35&30&1 &6 & 9 & 4\\
%  \hline
% 1&3&2&3&3&13&14&9&15&1 &5 & 6 & 2\\
%  \hline
% 3&6&3&4&6&61&82&51&63&1 &6 & 8 & 3\\
%  \hline
%   1&3&2&2&4&13&11&12&9&1 &5 & 6 & 2\\
%    \hline
% 1&4&2&2&4&13&12&19&11&1 &5 &7 & 3 \\
%  \hline
%\end{tabular}
%\end{table} 

% We refer the reader to the book \cite{greuel-pfister} for the basics of Commutative Algebra as we use Singular \cite{singular} in our computations.

% Corollary 2.4 of \cite{SahinSahin} states that indispensable binomials of $I_S$ are $\{f_1, f_2, f_3, f_4,f_5 \}$ if $\alpha_1-\alpha_{21}>2$ and are $\{f_1, f_2, f_3, f_5\}$ if $\alpha_1-\alpha_{21}=2$. Therefore, they must appear in every minimal generating set.

%\section*{Acknowledgements}

\end{document}